\documentclass[11pt,a4paper]{amsart}
\usepackage{etex} 
\usepackage[latin1]{inputenc}
\usepackage[T1]{fontenc}
\usepackage[right=3cm,left=3cm,top=3cm,bottom=4cm]{geometry}
\geometry{width=13.5cm}
\usepackage{ifthen}
\usepackage{enumerate}
\usepackage{amsmath}
\usepackage{amsthm}
\usepackage{amsfonts}
\usepackage{amssymb}
\usepackage{latexsym}
\usepackage{ae}

\title[Corrigendum]{Corrigendum to the paper 'On the ideal theorem for number fields` [Funct. Approximatio, Comment. Math. 53, No. 1, 31--45 (2015)]}
\author{Olivier Bordellès}
\address{2 allée de la combe \\ 43000 Aiguilhe \\ France}
\email{borde43@wanadoo.fr}
\date{}
\dedicatory{}

\newcommand{\R}{\mathbb {R}}
\newcommand{\C}{\mathbb {C}}

\theoremstyle{theorem}
\newtheorem{theorem}{Theorem}[section]
\newtheorem{prop}[theorem]{Proposition}

\theoremstyle{definition}

\theoremstyle{remark}
\newtheorem{rem}[theorem]{Remark}

\makeatletter
\@namedef{subjclassname@2020}{%
  \textup{2020} Mathematics Subject Classification}
\makeatother

\begin{document}

\begin{abstract}
This paper is a corrigendum to the article 'On the ideal theorem for number fields`. The main result of this paper proves to be untrue and is replaced by an estimate of a weighted sum with an improved error term.
\end{abstract}

\subjclass[2020]{11N37, 11R42}
\keywords{Ideal theorem, Sub-convexity bounds, Dedekind zeta function.}

\maketitle

\section{Introduction}
\label{s1}

\noindent
Let $K$ be an algebraic number field of degree $n$, $\zeta_K$ be the attached Dedekind zeta-function and $r_K$ be the ideal-counting function of $K$. It is customary to set
$$\Delta_K(x) := \sum_{m \leqslant x} r_K(m) - \kappa_K x$$
where $\kappa_K$ is the residue of $\zeta_K(s)$ at $s=1$. Using contour integration, Landau \cite[Satz~210]{lan} proved that
$$\Delta_K(x) \ll x^{1-\frac{2}{n+1} + \varepsilon}$$
which was slightly improved by Nowak \cite{now} for $4 \leqslant n \leqslant 9$ and by Lao \cite{lao} for $n \geqslant 10$ who showed that
$$\Delta_K(x) \ll x^{1-\frac{3}{n+6} + \varepsilon}.$$
In \cite{bor15}, an alternative method was used, starting with the Voronoï's identity, to show that, for $n \geqslant 4$
\begin{equation}
   \Delta_K(x) \ll x^{1-\frac{4}{2n+1} + \varepsilon}. \label{eq:bor}
\end{equation}
This result is based among other things upon the following Proposition \cite[Proposition~3.5]{bor15} stating that, if $X \geqslant 1$ is a real number, $1 \leqslant M < M_1 \leqslant 2M$ and $1 \leqslant N < N_1 \leqslant 2N$ are integers, $\alpha ,\beta \in \R$ are such that $(\alpha - 1) (\alpha - 2) \alpha \beta \neq 0$, and if $(a_m),(b_n) \in \C$ are two sequences of complex numbers such that $|a_m| \leqslant 1$ and $|b_n| \leqslant 1$, then, if $M \gg X$
\begin{multline*}
   \left( MN \right)^{-\varepsilon} \sum_{M < m \leqslant M_1} a_m \sum_{N < n \leqslant N_1} b_n e \left( X \left( \frac{m}{M} \right)^\alpha \left( \frac{n}{N} \right)^\beta \right) \\\ll  \left( XM^5N^7 \right)^{1/8} + N\left( X^{-2}M^{11} \right)^{1/12} + \left( X^{-3} M^{21} N^{23} \right)^{1/24} + M^{3/4}N  + X^{-1/4} M N
\end{multline*}
where, as usual, $e(x) = e^{2 i \pi x}$. Unfortunately, Roger Baker \cite{bak20} pointed to me the fact that this bound is impossible at least when $N=1$, since taking $a_m := e \left( -Xm^\alpha M^{-\alpha} \right)$ and $b_1=1$ implies that the sum on the left is $\asymp M$. An inspection of the proof reveals that \cite[Theorem~3]{rob} is improperly used, so that the very last part of the proof cannot be handled by this result. This entails that the estimate \eqref{eq:bor} remains still unproven, and the question of an improvement of Landau-Nowak's results in the cases $4 \leqslant n \leqslant 9$ is still open.

\section{A weighted sum}

\noindent
Our aim in this section is to show that adding a very small weight to the sum enables us to derive a better error term. More precisely, we will show the following estimate.

\begin{theorem}
\label{th:weight_sum}
Assume $n \geqslant 6$. Then
$$\sum_{m \leqslant x/e} r_K(m) \log \log \tfrac{x}{m} = E_1(1) \kappa_K x + O_{K,\varepsilon} \left( x^{1 - \frac{3}{n} + \varepsilon} \right)$$
where, for any $X>0$
$$E_1(X) := \int_X^\infty \frac{e^{-t}}{t} \, \mathrm{d}t$$
and hence $E_1(1) \approx 0.22$.
\end{theorem}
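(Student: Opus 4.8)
The plan is to trade the sharp cut-off in $\Delta_K$ for the smooth weight $\log\log\frac xm$ and to exploit that smoothing through the first-derivative test, so that — in contrast with \cite{bor15} — no bilinear decomposition of $r_K(m)$, and hence no appeal to the exponential-sum estimate that invalidated \eqref{eq:bor}, is ever needed. One could also start from the Mellin representation $\sum_{m\le x/e}r_K(m)\log\log\frac xm=\frac1{2\pi i}\int_{(c)}\frac{E_1(s)}{s}\zeta_K(s)x^s\,\mathrm{d}s$ for $c>1$ (the Mellin transform of $u\mapsto\mathbf 1_{(0,1/e]}(u)\log\log(1/u)$ being $E_1(s)/s$), but shifting the line of integration would require subconvexity for $\zeta_K$ of a strength not presently available, so I would argue instead via Voronoï summation.

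\emph{Step 1 (reduction to an integral of $\Delta_K$).} Write $A(t):=\sum_{m\le t}r_K(m)=\kappa_K t+\Delta_K(t)$. Since $\frac{\mathrm d}{\mathrm dt}\log\log\frac xt=-\frac1{t\log(x/t)}$ and $\log\log\frac xt$ vanishes at $t=x/e$, Abel summation gives
$$\sum_{m\le x/e}r_K(m)\log\log\tfrac{x}{m}=\int_1^{x/e}\frac{A(t)}{t\log(x/t)}\,\mathrm dt=\kappa_K\int_1^{x/e}\frac{\mathrm dt}{\log(x/t)}+\int_1^{x/e}\frac{\Delta_K(t)}{t\log(x/t)}\,\mathrm dt.$$
With the substitution $t=xe^{-v}$ the first integral equals $x\bigl(E_1(1)-E_1(\log x)\bigr)=E_1(1)x+O(1/\log x)$, which produces the main term $E_1(1)\kappa_K x$. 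It therefore remains to prove
$$J(x):=\int_1^{x/e}\frac{\Delta_K(t)}{t\log(x/t)}\,\mathrm dt\ll_{K,\varepsilon}x^{1-\frac{3}{n}+\varepsilon}.$$

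\emph{Step 2 (insert Voronoï and integrate).} I would feed into $J(x)$ the truncated Voronoï summation formula for $\zeta_K$ — the very identity on which \cite{bor15} rests — in the form: for $Y\ge1$,
$$\Delta_K(u)=c_K\,u^{\frac{n-1}{2n}}\sum_{m\le Y}\frac{r_K(m)}{m^{\frac{n+1}{2n}}}\cos\bigl(2\pi n(mu)^{1/n}+\gamma_K\bigr)+O_{K,\varepsilon}\bigl(u^{\frac12+\varepsilon}\bigr),$$
with suitable constants $c_K,\gamma_K$. Interchanging the (finite) sum with the integral, the $m$-th term becomes $m^{-\frac{n+1}{2n}}$ times $\int_1^{x/e}V(u)\cos\bigl(2\pi n(mu)^{1/n}+\gamma_K\bigr)\,\mathrm du$, where $V(u):=u^{\frac{n-1}{2n}-1}/\log(x/u)=u^{-\frac{n+1}{2n}}/\log(x/u)$. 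The phase $\phi(u)=2\pi n(mu)^{1/n}$ satisfies $\phi'(u)=2\pi m^{1/n}u^{1/n-1}>0$, and
$$\frac{V(u)}{\phi'(u)}=\frac{u^{\frac{n-3}{2n}}}{2\pi m^{1/n}\log(x/u)}$$
is, for $n\ge3$, a product of a non-decreasing and an increasing positive function on $(1,x/e)$, hence monotone; this monotonicity, a consequence of the smoothness of the $\log\log$-weight, is precisely what makes the first-derivative test efficient. It yields $\int_1^{x/e}V\cos\phi\,\mathrm du\ll x^{\frac{n-3}{2n}}m^{-1/n}$, whence
$$J(x)\ll x^{\frac{n-3}{2n}}\sum_{m\le Y}\frac{r_K(m)}{m^{\frac{n+3}{2n}}}+\int_1^{x/e}\frac{u^{\frac12+\varepsilon}}{u\log(x/u)}\,\mathrm du\ll x^{\frac{n-3}{2n}}Y^{\frac{n-3}{2n}+\varepsilon}+x^{\frac12+\varepsilon},$$
the $m$-sum being controlled by partial summation from $\sum_{m\le M}r_K(m)\ll_K M$ (note that $\frac{n+3}{2n}<1$). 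Choosing $Y\asymp x$ turns the first term into $x^{1-\frac{3}{n}+\varepsilon}$.

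\emph{Step 3 (the hypothesis and the obstacle).} With $Y\asymp x$ forced on us by the size of the oscillating sum, the residual square-root-type error $x^{1/2+\varepsilon}$ is absorbed into $x^{1-\frac{3}{n}+\varepsilon}$ exactly when $1-\frac{3}{n}\ge\frac12$, i.e. when $n\ge6$ — this is the origin of the hypothesis. The main obstacle is to have at one's disposal a truncated Voronoï formula for $\zeta_K$ with an error term of the quality used above, uniformly in $u$ and in $Y$; for this I would re-use the (correct) analytic part of \cite{bor15}, essentially Landau's contour integration for Dedekind zeta functions, rather than reprove it. The remaining points — justifying the interchange of sum and integral, tracking the $u^\varepsilon$-factors, and disposing of the trivial ranges where $u$ is a small power of $x$ — are routine once that formula is in hand.
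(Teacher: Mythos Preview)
Your route is not the paper's. The paper never touches Vorono\"{\i}: it proves, by shifting $\frac1{2\pi i}\int_{(2)}\zeta_K(s)x^s s^{-2}\,\mathrm ds$ to $\sigma=1-\frac3n+\varepsilon$ and invoking Heath--Brown's bound $\zeta_K(\tfrac12+it)\ll|t|^{n/6+\varepsilon}$, that $\sum_{m\le x}r_K(m)\log\frac{x}{m}=\kappa_K x+O(x^{1-3/n+\varepsilon})$ (the extra factor $s^{-1}$ coming from the $\log$-weight being exactly what makes the shifted integral absolutely convergent), and then expresses the $\log\log$-weighted sum as a combination of $\log$-weighted sums via an elementary partial-summation identity. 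Your dismissal of the Mellin approach is therefore mistaken: one integration by parts gives $E_1(s)/s\ll|t|^{-2}$ on any vertical line in $\sigma>0$, while Heath--Brown together with Phragm\'en--Lindel\"of yields $\zeta_K(1-\tfrac3n+\varepsilon+it)\ll|t|^{1-c\varepsilon}$ for $n\ge6$, so the shifted integral $\int_{(1-3/n+\varepsilon)}\zeta_K(s)\frac{E_1(s)}{s}x^s\,\mathrm ds$ converges absolutely. The required subconvexity \emph{is} available, and this is, in a lightly repackaged form, exactly what the paper does.

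Your Vorono\"{\i} argument may also go through --- the first-derivative step is clean and the saving from integrating the oscillation against the smooth weight is genuine --- but it rests on an input you have not secured. The error term $O_{K,\varepsilon}(u^{1/2+\varepsilon})$ you quote for the truncated formula, with no dependence on $Y$, is false as stated: take $Y$ bounded and you would deduce $\Delta_K(u)\ll u^{1/2+\varepsilon}$, far beyond anything known. What you actually need is that error uniformly for $u\in[1,x/e]$ with $Y\asymp x$ fixed, and extracting such a statement from the standard truncated formulae, or from the analytic part of \cite{bor15}, is not automatic. If that point were settled your proof would be subconvexity-free, which is a real advantage; the paper's route trades that for brevity, needing only Heath--Brown plus one integration by parts.
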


\subsection{Tools}

\begin{prop}
\label{t1-1}
Let $K$ be an algebraic number field of degree $n \geqslant 2$ and $x \geqslant d_K^{1/2}$ be a large real number. For any $\varepsilon \in \left( 0,\frac{1}{2}\right)$, we have
$$\frac{1}{x} \int_1^x \left( \sum_{m \leqslant t} r_K(m) - \kappa_K t \right) \mathrm{d}t = O_{K,\varepsilon} \left( x^{\lambda_n+\varepsilon} \right)$$
where
$$\lambda_n := \begin{cases} \frac{3}{4} - \frac{3}{2n}, & \text{if} \quad 2 \leqslant n \leqslant 6 \\ & \\ 1 - \frac{3}{n}, & \text{if} \quad n \geqslant 6. \end{cases}$$
Similarly
$$\sum_{m \leqslant t} r_K(m) \log \tfrac{x}{m} = \kappa_K x + O_{K,\varepsilon} \left( x^{\lambda_n+\varepsilon} \right).$$
\end{prop}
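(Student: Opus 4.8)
The two displayed estimates are equivalent up to a routine partial summation. Writing $\Delta_K(t):=\sum_{m\leqslant t}r_K(m)-\kappa_K t$ and $\log(x/m)=\int_m^x\mathrm{d}t/t$, one has
$$\sum_{m\leqslant x}r_K(m)\log\tfrac{x}{m}=\int_1^x\frac1t\Bigl(\sum_{m\leqslant t}r_K(m)\Bigr)\mathrm{d}t=\kappa_K(x-1)+\int_1^x\frac{\Delta_K(t)}{t}\,\mathrm{d}t,$$
and one further integration by parts turns $\int_1^x t^{-1}\Delta_K(t)\,\mathrm{d}t$ into $x^{-1}\int_1^x\Delta_K(t)\,\mathrm{d}t$ up to an error $O(x^{\lambda_n+\varepsilon})$ (here $\lambda_n\geqslant 0$ is used). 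So it is enough to estimate $\Phi(x):=\sum_{m\leqslant x}r_K(m)\log(x/m)$, equivalently the integral $\int_1^x\Delta_K(t)\,\mathrm{d}t$.

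The plan is to treat $\Phi(x)$ by contour integration followed by Vorono\"i summation. Since $\zeta_K$ is bounded on $\re s=c>1$ and $s^{-2}$ is integrable, $\Phi(x)=\frac1{2\pi i}\int_{(c)}\zeta_K(s)\,x^s s^{-2}\,\mathrm{d}s$; moving the line of integration to the left, the simple pole of $\zeta_K$ at $s=1$ (where $x^s s^{-2}=x$) produces the main term $\kappa_K x$, while the pole of $x^s s^{-2}$ at $s=0$ contributes only $O_K(\log x)$. For the remaining integral I would invoke the functional equation $\zeta_K(s)=X_K(s)\zeta_K(1-s)$, with $X_K$ the usual quotient of $\Gamma$-factors, so that $|X_K(\sigma+it)|\asymp(|t|/2\pi)^{n(1/2-\sigma)}$ up to powers of $d_K$ and a smooth phase; a saddle-point analysis of the resulting Bessel-type kernels then yields a truncated Vorono\"i expansion whose principal part is essentially
$$x^{\frac{n-3}{2n}}\sum_{m\leqslant M}\frac{r_K(m)}{m^{\frac{n+3}{2n}}}\,e\bigl(c_K(mx)^{1/n}\bigr),\qquad c_K>0,$$
plus an error term $R(x,M)$ that decreases with the truncation length $M$.

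The crux is then the exponential sum $\sum_{m\leqslant M}r_K(m)\,e\bigl(c_K(mx)^{1/n}\bigr)$. Here I would break $r_K$ by the Dirichlet hyperbola method, using a factorisation of $\zeta_K(s)$ into Dirichlet series of smaller degree, so as to reduce to two-dimensional exponential sums of van der Corput type, to which an exponent pair can be applied. Balancing the size of this sum and of $R(x,M)$ against $M$ gives the exponent $\lambda_n$; the change in the shape of $\lambda_n$ at $n=6$ should reflect the value of $n$ at which the optimal exponent pair — equivalently, the optimal $M$ — switches, and for $2\leqslant n<6$, where $\lambda_n<\tfrac12$, the relevant line $\re s=\lambda_n$ lies in the critical strip to the left of $\tfrac12$, so that the growth factor $|X_K(\sigma+it)|\asymp|t|^{n(1/2-\sigma)}$ from the functional equation is what forces the $\tfrac34-\tfrac3{2n}$ form.

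I expect this last step to be the main obstacle. It is precisely the point at which \cite{bor15} broke down — through an inapplicable use of \cite[Theorem~3]{rob} — so one has to make sure the exponent-pair estimate genuinely applies in the ranges that occur; and the argument must be uniform in $K$, the discriminant entering only through the powers of $d_K$ carried by $X_K$, which are dominated by the hypothesis $x\geqslant d_K^{1/2}$ and absorbed into the $O_{K,\varepsilon}(x^\varepsilon)$.
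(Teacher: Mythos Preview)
Your plan takes a much harder road than the paper does, and the hard part of that road is exactly the stretch you yourself flag as uncertain. The paper's proof is a direct contour shift with no Vorono\"i step and no exponential sums at all. Starting from
\[
\frac{1}{x}\int_1^x\Bigl(\sum_{m\leqslant t}r_K(m)\Bigr)\mathrm{d}t=\frac{1}{2\pi i}\int_{2-i\infty}^{2+i\infty}\frac{\zeta_K(s)}{s(s+1)}\,x^s\,\mathrm{d}s,
\]
one simply moves the line to $\sigma=\lambda_n+\varepsilon$, picks up the residue $\tfrac12\kappa_Kx$ at $s=1$, and bounds the remaining integral using Heath--Brown's subconvexity estimate $\zeta_K(\tfrac12+it)\ll_{K,\varepsilon}(|t|+1)^{n/6+\varepsilon}$. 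Phragm\'en--Lindel\"of gives $\zeta_K(\sigma+it)\ll(|t|+1)^{n(1-\sigma)/3+\varepsilon}$ for $\tfrac12\leqslant\sigma\leqslant1$, and the factor $|s(s+1)|^{-1}\asymp t^{-2}$ makes the integral on $\sigma=1-\tfrac3n+\varepsilon$ converge absolutely, yielding $O(x^{1-3/n+\varepsilon})$. For $2\leqslant n\leqslant6$ one pushes past the critical line and uses instead $\zeta_K(\sigma+it)\ll(|t|+1)^{n(3-4\sigma)/6+\varepsilon}$ on $0\leqslant\sigma\leqslant\tfrac12$ (functional equation plus convexity), which forces $\sigma>\tfrac34-\tfrac{3}{2n}$; this is the source of the piecewise shape of $\lambda_n$, not any change of exponent pair. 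The second displayed sum is handled identically with $s^{-2}$ in place of $s^{-1}(s+1)^{-1}$.

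Your proposal, by contrast, sets up the same Perron integral but then invokes the functional equation, a truncated Vorono\"i expansion, a hyperbola decomposition of $r_K$, and van der Corput exponent pairs. That machinery would, if it worked, essentially be re-deriving Heath--Brown's $n/6$ bound from scratch; but you do not actually carry out the exponential-sum step, and you note it is ``precisely the point at which \cite{bor15} broke down''. So as written the proposal has a genuine gap at its most delicate point, while the paper avoids the issue entirely by quoting the subconvexity bound and exploiting the extra $t^{-1}$ of decay that the smoothing weight $\log(x/m)$ (equivalently, the extra factor $(s+1)^{-1}$ or second factor of $s^{-1}$) provides. That extra decay is the whole reason this weighted statement is easier than the pointwise one.
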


\begin{proof}
Note that it is equivalent to show that
$$\frac{1}{x} \int_1^x \left( \sum_{m \leqslant t} r_K(m) \right) \mathrm{d}t = \tfrac{1}{2}\kappa_K x + O_{K,\varepsilon} \left( x^{\lambda_n+\varepsilon} \right).$$
From Perron's formula
$$\frac{1}{x} \int_1^x \left( \sum_{m \leqslant t} r_K(m) \right) \textrm{d}t = \frac{1}{2 \pi i} \int_{2-i \infty}^{2+i \infty} \frac{\zeta_K(s)}{s(s+1)} \, x^s \, \textrm{d}s.$$
Assume first that $n \geqslant 6$. Moving the integration contour to the line $\sigma = 1 - \frac{3}{n} + \varepsilon$ and taking the residue of the function $\zeta_K(s) x^s s^{-1} (s+1)^{-1}$ into account, we get
\begin{align*}
   \frac{1}{x} \int_1^x \left( \sum_{m \leqslant t} r_K(m) \right) \textrm{d}t &= \tfrac{1}{2}\kappa_K x + \frac{1}{2 \pi i} \int_{1-3/n+\varepsilon - i \infty}^{1-3/n+\varepsilon+i \infty} \frac{\zeta_K(s)}{s(s+1)} \, x^s \, \textrm{d}s \\
   &:= \tfrac{1}{2}\kappa_K x + I(x).
\end{align*}
From \cite{hea}, we know that for any $t \in \R$
$$\zeta_K \left( \tfrac{1}{2} + it \right) \ll_{K,\varepsilon} \left( |t|+1 \right)^{n/6 + \varepsilon}$$
which implies using the Phragm\'{e}n-Lindel\"{o}f principle that
\begin{equation}
   \zeta_K \left( \sigma + it \right) \ll_\varepsilon \left( |t|+1 \right)^{n(1-\sigma)/3 + \varepsilon} \quad \left( \tfrac{1}{2} \leqslant \sigma \leqslant 1, \ t \in \R \right) \label{e1}
\end{equation}
and hence
\begin{align*}
   \left | I(x) \right | & \ll_\varepsilon x^{1-3/n+\varepsilon} \left( 1 + \int_1^\infty \left | \zeta_K \left( 1 - \tfrac{3}{n} + \varepsilon + it \right) \right | t^{-2} \, \textrm{d}t \right) \\
   & \ll_{K,\varepsilon} x^{1-3/n+\varepsilon} \left( 1 + \int_1^\infty t^{-1 - \varepsilon (\frac{1}{3}n-1)} \, \textrm{d}t \right) \\
   & \ll_{K,\varepsilon} x^{1-3/n+\varepsilon} 
\end{align*}
as announced. The case $2 \leqslant n \leqslant 6$ is similar, except that we move the integration contour to the line $\sigma = \frac{3}{4} - \frac{3}{2n} + \varepsilon$ and we replace \eqref{e1} by
$$\zeta_K \left( \sigma + it \right) \ll_\varepsilon \left( |t|+1 \right)^{n(3-4\sigma)/6 + \varepsilon} \quad \left( 0 \leqslant \sigma \leqslant \tfrac{1}{2}, \ t \in \R \right).$$
For the $2$nd sum, the arguments are similar since
$$\sum_{m \leqslant x} r_K(m) \log \tfrac{x}{m} = \frac{1}{2 \pi i} \int_{2-i \infty}^{2+i \infty} \frac{\zeta_K(s)}{s^2} \, x^s \, \textrm{d}s.$$
The proof is complete.
\end{proof}

\begin{rem}
This proposition implies that there exists a real number $x_0 \geqslant d_K^{1/2}$ such that
$$\sum_{m \leqslant x_0} r_K(m) = \kappa_K x_0 + O_{K,\varepsilon}\left( x_0^{1-3/n+\varepsilon} \right) \quad \left( n \geqslant 6 \right).$$
\end{rem}

\subsection{Proof of Theorem~\ref{th:weight_sum}}

\noindent
On the one hand, using partial summation
$$\sum_{m \leqslant x/e} r_K(m) \log \log \tfrac{x}{m} = \int_1^{x/e} \frac{1}{t \log(x/t)} \left( \sum_{m \leqslant t} r_K(m) \right) \, \textrm{d}t$$
and on the other hand
\begin{align*}
   \sum_{m \leqslant x/e} r_K(m) &= \sum_{m \leqslant x/e} r_K(m) \log \frac{x}{m} - \int_1^{x/e} \frac{1}{t \log^2(x/t)} \left( \sum_{m \leqslant t} r_K(m) \log \frac{x}{m} \right) \, \textrm{d}t \\
   &= \sum_{m \leqslant x/e} r_K(m) \log \frac{x}{em} + \sum_{m \leqslant x/e} r_K(m) \\
   &  \qquad - \int_1^{x/e} \frac{1}{t \log^2(x/t)} \left( \sum_{m \leqslant t} r_K(m) \left( \log \frac{t}{m} + \log \frac{x}{t} \right) \right) \, \textrm{d}t \\
\end{align*}
so that
\begin{multline*}
   0 = \sum_{m \leqslant x/e} r_K(m) \log \frac{x}{em} - \int_1^{x/e} \frac{1}{t \log^2(x/t)} \left( \sum_{m \leqslant t} r_K(m) \log \frac{t}{m} \right) \, \textrm{d}t \\
   - \int_1^{x/e} \frac{1}{t \log(x/t)} \left( \sum_{m \leqslant t} r_K(m) \right) \, \textrm{d}t.
\end{multline*}
Hence
\begin{multline*}
   \sum_{m \leqslant x/e} r_K(m) \log \log \tfrac{x}{m} = \sum_{m \leqslant x/e} r_K(m) \log \frac{x}{em} \\
   - \int_1^{x/e} \frac{1}{t \log^2(x/t)} \left( \sum_{m \leqslant t} r_K(m) \log \frac{t}{m} \right) \, \textrm{d}t
\end{multline*}
and using Theorem~\ref{t1-1} we get
\begin{align*}
   \sum_{m \leqslant x/e} r_K(m) \log \log \tfrac{x}{m} &= \kappa_K e^{-1} x + O \left( x^{1-3/n+\varepsilon} \right) \\
   & \qquad - \int_1^{x/e} \frac{1}{t \log^2(x/t)} \left( \kappa_K t + O \left( t^{1-3/n+\varepsilon} \right)  \right) \, \textrm{d}t \\   
   &= \kappa_K \left( \frac{x}{e} - \int_1^{x/e} \frac{\textrm{d}t}{\log^2(x/t)} \right)  + O \left( x^{1-3/n+\varepsilon} \right) \\
   &= \kappa_K x \left( e^{-1} - \int_1^{\log x} \frac{e^{-u}}{u^2} \, \textrm{d}u \right)  + O \left( x^{1-3/n+\varepsilon} \right)
\end{align*}
and integrating by parts provides
\begin{align*}
   \int_1^{\log x} \frac{e^{-u}}{u^2} \, \textrm{d}u &= - \left. \frac{e^{-u}}{u} \right|_{1}^{\log x} - \int_1^{\log x} \frac{e^{-u}}{u} \, \textrm{d}u \\
   &= e^{-1} - E_1(1) + E_1(\log x) - \frac{1}{x\log x}
\end{align*}
and the bounds 
$$\frac{1}{x \log(ex)} < E_1(\log x) < \frac{1}{x \log x} \quad \left( x \geqslant 2 \right)$$
imply
$$\int_1^{\log x} \frac{e^{-u}}{u^2} \, \textrm{d}u = e^{-1} - E_1(1) + O \left( \frac{1}{x\log x} \right)$$
completing the proof.
\qed

\section*{Acknowledgements}

\noindent
The author sincerely thanks Prof.~Roger Baker for having pointed out this mistake in the original paper.

\end{document}